\newcommand\cyr{%
\renewcommand\rmdefault{wncyr}%
\renewcommand\sfdefault{wncyss}%
\renewcommand\encodingdefault{OT2}%
\normalfont
\selectfont}
\DeclareTextFontCommand{\textcyr}{\cyr}
\DeclareFontFamily{OT1}{rsfs}{}
\DeclareFontShape{OT1}{rsfs}{n}{it}{<-> rsfs10}{}
\DeclareMathAlphabet{\mathscr}{OT1}{rsfs}{n}{it}
\numberwithin{equation}{section}
\newtheorem{theorem}{Theorem}[section]
\newtheorem{lemma}[theorem]{Lemma}
\newtheorem{proposition}[theorem]{Proposition}
\newtheorem{corollary}[theorem]{Corollary}
\newtheorem{question}{Question}
\theoremstyle{definition}
\newtheorem{definition}[theorem]{Definition}
\newtheorem{remark}[theorem]{Remark}
\theoremstyle{remark}
\newtheorem{acknowledgement}{Acknowledgement}
\renewcommand{\ker}{\operatorname{Ker}}
\newcommand{\fm}{\frak{m}}
\newcommand{\fp}{\frak{p}}
\newcommand{\fq}{\frak{q}}
\begin{document}
\title[Frobenius test exponents]{Notes on the Frobenius test exponents}

\author[Duong Thi Huong]{Duong Thi Huong}
\address{Department of Mathematics, Thang Long University, Hanoi, Vietnam}
\email{duonghuongtlu@gmail.com}

\author[Pham Hung Quy]{Pham Hung Quy}
\address{Department of Mathematics, FPT University, Hanoi, Vietnam}
\email{quyph@fe.edu.vn}

\thanks{2010 {\em Mathematics Subject Classification\/}:13A35, 13D45.\\
The second author is partially supported by a fund of Vietnam National Foundation for Science
and Technology Development (NAFOSTED) under grant number
101.04-2017.10.}

\keywords{The Frobenius test exponent, The Hartshorne-Speiser-Lyubeznik number, Local cohomology, Filter regular sequence.}

\begin{abstract} In this paper we show that the Frobenius test exponent for parameter ideals of a local ring of prime characteristic is always bigger than or equal to its Hartshorne-Speiser-Lyubeznik number. Our argument is based on an isomorphism of Nagel and Schenzel on local cohomology for which we will provide an elementary proof.
\end{abstract}

\maketitle

\section{Introduction}
Let $R$ be a Noetherian commutative ring of prime characteristic $p>0$, and $I$ an ideal of $R$. The {\it Frobenius closure} of $I$ is $I^F = \{x \mid x^{p^e} \in I^{[p^e]} \text{ for some } e \ge 0\}$, where $I^{[p^e]} = (r^{p^e} \mid r \in I)$ is the $e$-th Frobenius power of $I$. It is hard to compute $I^F$. By the Noetherianess of $R$ there is an integer $e$, depending on $I$, such that $(I^F)^{[p^e]} = I^{[p^e]}$. We call the smallest number $e$ satisfying the condition the {\it Frobenius test exponent} of $I$, and denote it by $Fte(I)$. It is natural to expect the existence of a uniform number $e$, depending only on the ring $R$, such that, for every ideal $I$ we have $(I^F)^{[p^e]} = I^{[p^e]}$, i.e. $Fte(I) \le e$ for every ideal $I$. If we have a positive answer to
this question, then the two conditions $x \in I^F$ and $x^{p^e} \in I^{[p^e]}$ are equivalent. This gives in particular a finite test for the Frobenius closure. We call such a number $e$ a {\it Frobenius test exponent} for the ring $R$. However, Brenner \cite{B06} gave two-dimensional normal standard graded domains with no Frobenius test exponent. In contrast, Katzman and Sharp \cite{KS06} showed the existence of a uniform bound of Frobenius test exponents if we restrict to the class of parameter ideals in a Cohen-Macaulay local ring. For any local ring $(R, \fm)$ we define the {\it Frobenius test exponent for parameter ideals}, denoted by $Fte(R)$, is the smallest integer $e$ such that $(\fq^F)^{[p^e]} = \fq^{[p^e]}$ for every parameter ideal $\fq$ of $R$, and $Fte(R) = \infty$ if we have no such integer. Katzman and Sharp asked whether $Fte(R) < \infty$ for any (equidimensional) local ring. Furthermore, the authors of \cite{HKSY06} confirmed the question for generalized Cohen-Macaulay local rings. Recently the second author gave a positive answer for the question for $F$-nilpotent rings \cite{Q18}.\\
The main idea in \cite{HKSY06, KS06} is connecting the Frobenius test exponent for parameter ideals with an invariant defined by the Frobenius actions on the local cohomology modules $H^i_{\fm}(R)$, namely {\it the Hartshorne-Speiser-Lyubeznik number} of $H^i_{\fm}(R)$. Recall that the Frobenius endomorphism $F: R \to R, x \mapsto x^p$ induces Frobenius actions on the local cohomology modules $H^i_{\fm}(R)$ for all $i \ge 0$. Roughly speaking, the Hartshorne-Speiser-Lyubeznik number of $H^i_{\fm}(R)$, denoted by $HSL(H^i_{\fm}(R))$, is a nilpotency index of Frobenius actions on $H^i_{\fm}(R)$ for all $i \ge 0$ (see Section 3 for details).  The Hartshorne-Speiser-Lyubeznik number of $R$ is $HSL(R) = \max \{HSL(H^i_{\fm}(R)) \mid i = 0, \ldots, \dim R\}$. Moreover, the Frobenius action $F: H^i_{\fm}(R) \to H^i_{\fm}(R)$ is injective for all $i \ge 0$ (in this case, $R$ is called {\it $F$-injective}) if and only if $HSL(R) = 0$. If $R$ is Cohen-Macaulay, Katzman and Sharp proved the equality $Fte(R) = HSL(R)$. The main result of the present paper as follows
\begin{theorem}\label{main thm}
Let $(R, \fm)$ be a local ring of positive characteristic $p>0$ of dimension $d$. Then $Fte(R) \ge HSL(R)$.
\end{theorem}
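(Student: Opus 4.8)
The plan is to prove the sharper assertion that $Fte(R)\ge HSL(H^i_\fm(R))$ for \emph{each} $i$ with $0\le i\le d$; the theorem then follows by taking the maximum over $i$. Fix $i$, set $e:=HSL(H^i_\fm(R))$, and assume $e\ge 1$ (otherwise there is nothing to prove). Choose a system of parameters $x_1,\ldots,x_d$ of $R$ that is a filter regular sequence with respect to $\fm$. The decisive input is the Nagel--Schenzel isomorphism, applied to the filter regular sequence $x_1,\ldots,x_i$, in the form
\[
H^i_\fm(R)\;\cong\;\Gamma_\fm\bigl(H^i_{(x_1,\ldots,x_i)}(R)\bigr),\qquad H^i_{(x_1,\ldots,x_i)}(R)=\varinjlim_n R/(x_1^n,\ldots,x_i^n),
\]
the transition maps in the colimit being multiplication by $x_1\cdots x_i$, and — this is the version for which an elementary proof is to be supplied — the isomorphism intertwining the Frobenius actions, where on the right $F$ sends the class of $z+(x_1^n,\ldots,x_i^n)$ to the class of $z^p+(x_1^{np},\ldots,x_i^{np})$.

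Since $e$ is least with $F^e$ annihilating the nilpotent submodule $0^F_{H^i_\fm(R)}$, I would pick $\xi\in 0^F_{H^i_\fm(R)}$ with $F^{e-1}\xi\ne 0$ and $F^{e}\xi=0$, transport it across the isomorphism, and represent it at some level $n$ by $z+(x_1^n,\ldots,x_i^n)$ with $\fm^M z\subseteq(x_1^n,\ldots,x_i^n)$ for some $M$ (the last condition records membership in $\Gamma_\fm$). Unwinding the colimit, the condition $F^{e}\xi=0$ becomes: there is $c\ge 0$ with $z^{p^{e}}(x_1\cdots x_i)^{c}\in(x_1^{np^{e}+c},\ldots,x_i^{np^{e}+c})$, while the condition $F^{e-1}\xi\ne 0$ becomes: $z^{p^{e-1}}(x_1\cdots x_i)^{c'}\notin(x_1^{np^{e-1}+c'},\ldots,x_i^{np^{e-1}+c'})$ for every $c'\ge 0$.

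Next I would exhibit a parameter ideal with Frobenius test exponent at least $e$. Set $w:=z(x_1\cdots x_i)^{c}$ and $\fb:=(x_1^{n+c},\ldots,x_i^{n+c})$, and for a large integer $s$ put $\fq:=\fb+(x_{i+1}^{s},\ldots,x_d^{s})$, a parameter ideal. Multiplying the membership relation above by $(x_1\cdots x_i)^{c(p^{e}-1)}$ and absorbing the monomial factors into the generators gives $w^{p^{e}}\in\fb^{[p^{e}]}\subseteq\fq^{[p^{e}]}$, so $w\in\fq^F$. On the other hand, taking $c'=cp^{e-1}$ in the non-membership relation gives $w^{p^{e-1}}\notin\fb^{[p^{e-1}]}$, and since $\fm^M w\subseteq\fb$ the element $w^{p^{e-1}}$ is $\fm$-power torsion modulo $\fb^{[p^{e-1}]}$, so its class lies in the finite length module $\Gamma_\fm(R/\fb^{[p^{e-1}]})$; by the Artin--Rees lemma, once $s$ is large this module meets $(x_{i+1}^{sp^{e-1}},\ldots,x_d^{sp^{e-1}})(R/\fb^{[p^{e-1}]})$ only in $0$, so $w^{p^{e-1}}\notin\fb^{[p^{e-1}]}+(x_{i+1}^{sp^{e-1}},\ldots,x_d^{sp^{e-1}})=\fq^{[p^{e-1}]}$. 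Hence $(\fq^{F})^{[p^{e-1}]}\ne\fq^{[p^{e-1}]}$, so $Fte(\fq)\ge e$, and therefore $Fte(R)\ge e=HSL(H^i_\fm(R))$.

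The step I expect to be the real obstacle is producing the Frobenius-equivariant form of the Nagel--Schenzel isomorphism — an elementary but careful direct-limit argument — since for $i<d$ the module $H^i_\fm(R)$ is not presented directly by parameter ideals and everything hinges on turning the abstract datum $\xi$ into honest congruences for powers of $z$. Granting that, the remaining points are bookkeeping: inflating $z$ to $w=z(x_1\cdots x_i)^{c}$ to clear denominators in the colimit, and taking $s$ large enough (via Artin--Rees) that appending $x_{i+1}^{s},\ldots,x_d^{s}$ to $\fb$ to obtain a genuine parameter ideal does not destroy the non-membership $w^{p^{e-1}}\notin\fq^{[p^{e-1}]}$. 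The case $i=d$ is the degenerate one in which no generators are appended (so $\fq=\fb$) and the isomorphism is a tautology, $H^d_\fm(R)$ already being $\varinjlim_n R/(x_1^n,\ldots,x_d^n)$; there the argument reduces to the familiar one for top local cohomology.
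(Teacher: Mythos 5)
Your argument is correct, and it reaches the theorem by a route whose mechanics differ genuinely from the paper's, although both rest on the same two pillars: a filter regular system of parameters together with the Frobenius-compatible Nagel--Schenzel embedding $H^i_{\fm}(R)\hookrightarrow H^i_{(x_1,\ldots,x_i)}(R)$ (Theorem \ref{Nagel-Schenzel}), and the colimit description of top local cohomology with its explicit Frobenius action. The paper argues in the forward direction: assuming $Fte(R)=e_0<\infty$, it deduces $((x_1^n,\ldots,x_t^n)^F)^{[p^{e_0}]}=(x_1^n,\ldots,x_t^n)^{[p^{e_0}]}$ for all $n,t$ by intersecting over appended parameter powers and invoking Krull's intersection theorem, and then uses the identification $0^F_{H^t_{(x_1,\ldots,x_t)}(R)}\cong\varinjlim_n (x_1^n,\ldots,x_t^n)^F/(x_1^n,\ldots,x_t^n)$ of Proposition \ref{limit Frobenius} to conclude that $F^{e_0}$ annihilates the nilpotent parts, hence $HSL(R)\le e_0$. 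You argue contrapositively: from a class $\xi$ of exact nilpotency order $e=HSL(H^i_{\fm}(R))$ you manufacture a single explicit parameter ideal $\fq$ with $w\in\fq^F$ but $w^{p^{e-1}}\notin\fq^{[p^{e-1}]}$, replacing Krull's intersection theorem by an Artin--Rees argument to choose $s$ so large that appending $x_{i+1}^s,\ldots,x_d^s$ does not absorb $w^{p^{e-1}}$; this is a bit longer and bypasses Proposition \ref{limit Frobenius}, and in exchange it is more constructive, exhibiting a witnessing parameter ideal for each cohomological degree. Your membership/non-membership translations and the manipulations with Frobenius powers check out. Two small points to tighten: the normalization $\fm^M z\subseteq(x_1^n,\ldots,x_i^n)$ is only available after raising the level of the representative (multiply by a suitable power of $x_1\cdots x_i$ and enlarge $n$, using that $\fm^M$ is finitely generated, since $\fm$-torsionness of $\xi$ a priori only gives such containments at varying levels); and the Frobenius-equivariance of the Nagel--Schenzel isomorphism, which you rightly single out as the delicate input, is used but likewise only asserted in the paper, so you are on equal footing there.
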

Our main technique is to analyze the local cohomology modules by using the Nagel-Schenzel isomorphism (cf. \cite[Proposition 3.4]{NS95}). Then we can consider any local cohomology $H^i_{\fm}(R)$ as a submodule of a top local cohomology whose Frobenius action can be understood explicitly. In the next section, we will give a new and simple proof for Nagel-Schenzel's isomorphism. The main result will be proved in Section 3 (Theorem \ref{inequality}). We also prove that the Frobenius test exponent for parameter ideals has a good behavior under localization (Proposition \ref{localization}).

\section{Nagel-Schenzel's isomorphism}
In this section, let $R$ be a commutative Noetherian ring, $M$ a finitely generated $R$-module and $I$ an ideal of $R$. The use of $I$-filter regular sequences on $M$ provide an useful technique for the study of local cohomology. In \cite[Proposition 3.4]{NS95} Nagel and Schenzel proved the following useful theorem (see also \cite{AS03}).
\begin{theorem}\label{Nagel-Schenzel}
Let $I$ be an ideal of a Noetherian ring $R$ and $M$ a finitely generated $R$-module. Let $x_1,\ldots,x_t$ an
$I$-filter regular sequence of $M$. Then we have
\[
H^i_{I}(M) \cong
\begin{cases}
H^i_{(x_1,\ldots, x_t)}(M) &\text{ if } i<t\\
H^{i-t}_{I}(H^t_{(x_1, \ldots, x_t)}(M)) &\text{ if } i \ge t.
\end{cases}
\]
\end{theorem}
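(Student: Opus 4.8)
The plan is to prove both isomorphisms simultaneously by induction on $t$, and — so that the induction does not leave the class of modules one is working with — to prove them for an \emph{arbitrary} $R$-module, reading the hypothesis as the condition that for each $i$ the module $\big((x_1,\dots,x_{i-1})M:_Mx_i\big)/(x_1,\dots,x_{i-1})M$ has support contained in $\V(I)$ (which, for any module, is the same as being annihilated element-wise by powers of $I$). The case $t=0$ is immediate: $(x_1,\dots,x_t)=(0)$, $H^0_{(0)}(M)=M$, and the assertion for $i\ge t$ says only $H^i_I(M)=H^i_I(M)$.

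For $t\ge1$, write $\mathfrak a=(x_1,\dots,x_t)$ and I would first dispose of $x_1$. Since $x_1$ is $I$-filter regular, after localizing at a prime $\mathfrak p\notin\V(I)$ the sequence becomes regular, so $x_1$ becomes a nonzerodivisor; hence $0:_Mx_1^{\,n}$ is supported in $\V(I)$ for all $n$, so $\Gamma_{(x_1)}(M)=\Gamma_{\mathfrak a}(M)=\Gamma_I(M)$ (the inclusions $\Gamma_{(x_1)}\subseteq\Gamma_{\mathfrak a}\subseteq\Gamma_I$ using $\mathfrak a\subseteq I$, the reverse one just shown), which already gives the case $i=0<t$. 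Passing to $M/\Gamma_I(M)$ — which changes none of $H^i_I$, $H^i_{\mathfrak a}$, $H^i_{(x_1)}$ for $i\ge1$ — I may assume $\Gamma_I(M)=0$ and $x_1$ a nonzerodivisor, so $H^1_{(x_1)}(M)=M_{x_1}/M$ sits in $0\to M\to M_{x_1}\to M_{x_1}/M\to0$. The only soft input of the whole argument is that $x_1\in I$ becomes a unit in $R_{x_1}$, so $IR_{x_1}$ is the unit ideal and $H^j_I(M_{x_1})=H^j_{IR_{x_1}}(M_{x_1})=0$ for all $j$; the long exact sequence then gives $H^i_I(M)\cong H^{i-1}_I\!\big(H^1_{(x_1)}(M)\big)$ for every $i\ge1$. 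Running the identical argument with $I$ replaced by $\mathfrak a$ — legitimate because an $I$-filter regular sequence with $\mathfrak a\subseteq I$ is automatically $\mathfrak a$-filter regular, since $\V(\mathfrak a)\supseteq\V(I)$ — gives $H^i_{\mathfrak a}(M)\cong H^{i-1}_{\mathfrak a}\!\big(H^1_{(x_1)}(M)\big)$ for $i\ge1$. For $t=1$ this, with the case $i=0$ above, is the theorem.

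For $t\ge2$ put $N=H^1_{(x_1)}(M)$. The crux is the claim that $x_2,\dots,x_t$ is an $I$-filter regular sequence on $N$: localizing at $\mathfrak p\notin\V(I)$ one has $N_{\mathfrak p}=H^1_{(x_1)}(M_{\mathfrak p})$ with $x_1,\dots,x_t$ regular on the finitely generated module $M_{\mathfrak p}$, and one checks — using permutability of regular sequences over a Noetherian local ring together with a snake-lemma computation on $0\to M_{\mathfrak p}\to(M_{\mathfrak p})_{x_1}\to N_{\mathfrak p}\to0$ — that $x_2,\dots,x_t$ stays a (possibly trivial) regular sequence on $N_{\mathfrak p}$. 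Granting this, I would apply the induction hypothesis to $N$ and the length-$(t-1)$ sequence $x_2,\dots,x_t$, with the ideal $I$ and also with the ideal $\mathfrak a$. For $i<t$ this yields $H^{i-1}_I(N)\cong H^{i-1}_{(x_2,\dots,x_t)}(N)\cong H^{i-1}_{\mathfrak a}(N)$, hence $H^i_I(M)\cong H^i_{\mathfrak a}(M)$. For $i\ge t$ it yields $H^{i-1}_I(N)\cong H^{i-t}_I\!\big(H^{t-1}_{(x_2,\dots,x_t)}(N)\big)$ and $H^{t-1}_{\mathfrak a}(N)\cong\Gamma_{\mathfrak a}\!\big(H^{t-1}_{(x_2,\dots,x_t)}(N)\big)$; but $N=M_{x_1}/M$ is $(x_1)$-torsion, so $H^{t-1}_{(x_2,\dots,x_t)}(N)$ is $(x_1)$-torsion (a subquotient of localizations of $N$ at $x_2,\dots,x_t$), hence equals its own $\mathfrak a$-torsion, giving $H^t_{\mathfrak a}(M)\cong H^{t-1}_{\mathfrak a}(N)\cong H^{t-1}_{(x_2,\dots,x_t)}(N)$. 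Chaining these isomorphisms identifies $H^i_I(M)$ with $H^{i-t}_I\!\big(H^t_{\mathfrak a}(M)\big)$, completing the induction.

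The hard part will be exactly the claim in the last paragraph that the shifted sequence is $I$-filter regular on the non-finitely generated module $N=H^1_{(x_1)}(M)$; everything else is long exact sequences plus the one observation that $x_1\in I$ is invertible once $x_1$ is inverted. Pinning this down is what forces the support-theoretic reformulation of filter regularity and the use of permutability of regular sequences after localization, and it is the reason the statement is proved for all modules rather than only finitely generated ones. (A reader content with more machinery can bypass the induction entirely: once one knows that $H^q_{\mathfrak a}(M)$ is $I$-torsion for $q<t$ and vanishes for $q>t$, the Grothendieck spectral sequence $H^p_I\!\big(H^q_{\mathfrak a}(M)\big)\Rightarrow H^{p+q}_I(M)$ collapses and delivers both isomorphisms at once; the purpose of this section is to avoid even that.)
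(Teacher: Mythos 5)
Your route is genuinely different from the paper's: the paper proves a general statement (its Theorem \ref{general Thm}) about a complex $0\to M\to M^1\to\cdots\to M^t\to 0$ whose positive-degree terms are killed by all $H^j_I(-)$ and whose cohomology below degree $t$ is $I$-torsion, and then applies it to the \v{C}ech complex of $x_1,\ldots,x_t$, so it never has to discuss filter regularity of truncated sequences on non-finitely generated modules. Your induction on $t$, however, has a gap at exactly the point you flag as the hard part. Your justification that $x_2,\ldots,x_t$ is $I$-filter regular on $N=H^1_{(x_1)}(M)$ localizes at $\fp\notin V(I)$ and invokes permutability of regular sequences together with finite generation of $M_{\fp}$. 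That is only available at the first stage of the induction: you deliberately set the induction up over \emph{arbitrary} modules because $N$ is not finitely generated, so as soon as $t\ge 3$ the module playing the role of $M$ at the next stage is $N$ itself, its localizations are not finitely generated, and the permutability theorem you cite does not apply there --- indeed permutability genuinely fails for non-finitely generated modules over a Noetherian local ring (for $R=k[[x,y]]$ and $L=R\oplus (R/(y))_x$, the pair $x,y$ is an $L$-regular sequence while $y$ is a zerodivisor on $L$). So the inductive step is not closed under the hypotheses you actually carry through the induction.

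The gap is repairable, but it needs an argument for the crux claim that is valid for arbitrary modules: if $x_1,\ldots,x_t$ is a weak regular sequence on an arbitrary module $L$, then $x_2,\ldots,x_t$ is a weak regular sequence on $L_{x_1}/L$. This can be done by hand without permutability: for $x_2$, from $x_2m=x_1^k m'$ one peels off factors of $x_1$ (using that $x_2$ is a nonzerodivisor modulo $x_1L$ and $x_1$ is a nonzerodivisor on $L$) to get $m\in x_1^kL$; the same computation gives $x_2L_{x_1}\cap L=x_2L$, and for the later $x_j$ one uses that powers of a weak regular sequence are again a weak regular sequence (true for arbitrary modules, by an elementary induction) so that $x_1^k,x_2,\ldots,x_j$ is still regular on $L$, which lets you lift relations back from $L_{x_1}/(L+(x_2,\ldots,x_{j-1})L_{x_1})$ to $L$. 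With that lemma in place your induction closes and the rest of your chaining of isomorphisms (including the identification $H^t_{(x_1,\ldots,x_t)}(M)\cong H^{t-1}_{(x_2,\ldots,x_t)}(N)$ via $\mathfrak a$-torsionness) is fine. The paper's proof avoids the issue altogether because its only inputs --- $I$-torsionness of $H^i_{(x_1,\ldots,x_t)}(M)$ for $i<t$ (Corollary \ref{torsion}) and $H^j_I(M_x)=0$ for $x\in I$ (Lemma \ref{vanish}) --- are verified on the original finitely generated $M$.
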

The most important case of Theorem \ref{Nagel-Schenzel} is $i = t$, and so $H^t_{I}(M) \cong H^0_{I}(H^t_{(x_1, \ldots, x_t)}(M))$ a submodule of $H^t_{(x_1, \ldots, x_t)}(M)$. Recently, many applications of this fact have been found \cite{DQ18, PQ18, QS17}. It should be noted that Nagel-Schenzel's theorem was proved by using spectral sequences. The aim of this section is to give an elementary proof for Theorem \ref{Nagel-Schenzel} based on standard arguments of local cohomology \cite{BS98}.
We recall the definition and some simple properties of $I$-filter regular sequences.
\begin{definition}
Let $M$ be a finitely generated module $R$ and let $x_1,\ldots,x_t \in I$ be a sequence of elements of $R$. Then we say that $x_1,\ldots,x_t$ is a $I$-\textit{filter regular sequence} on $M$ if the following condition hold:
$$\mathrm{Supp}\big(
((x_1,\ldots,x_{i-1})M:x_i)/(x_1,\ldots,x_{i-1})M \big )\subseteq V(I)$$
for all $i = 1,\ldots,t$, where $V(I)$ denotes the set of prime
ideals containing $I$. This condition is equivalent to  $x_i \notin \fp$ for all $\fp \in \mathrm{Ass}_R M/(x_1, \ldots, x_{i-1})M \setminus V(I)$ and for all $i = 1, \ldots, t$. In the case $(R, \fm)$ is a local ring, we call an $\fm$-filter regular sequence of $M$ simply by {\it a filter regular sequence} of $M$.
\end{definition}
\begin{remark}\label{filter}
It should be noted that for any $t \ge 1$ we always can choose a $I$-filter regular sequence $x_1, \ldots, x_t$ on $M$. Indeed, by the prime avoidance lemma we can choose $x_1 \in I$ and $x_1 \notin \fp$ for all $\fp \in \mathrm{Ass}_RM \setminus V(I)$. For $i>1$ assume that we have $x_1, \ldots, x_{i-1}$, then we choose  $x_i \in I$ and $x_i \notin \fp$ for all $\fp \in \mathrm{Ass}_RM/(x_1, \ldots, x_{i-1})M \setminus V(I)$ by the prime avoidance lemma again. For more details, see \cite[Section 2]{AS03}.
\end{remark}
The $I$-filter regular sequence can be seen as a generalization of the well-known notion of regular sequences (cf.
\cite[Proposition 2.2]{NS95}).
\begin{lemma}\label{regular seq} A sequence $x_1, \ldots, x_t \in I$ is an $I$-filter regular sequence on $M$ if and only if for all $\frak p \in \mathrm{Supp}(M) \setminus V(I)$, and for all $i \le t$ such that $x_1, \ldots, x_i \in \frak p$ we have
$\frac{x_1}{1},\cdots,\frac{x_i}{1}$ is an $M_{\frak p}$-sequence.
\end{lemma}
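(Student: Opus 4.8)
The plan is to reduce both directions of the equivalence to a single pointwise statement about the localizations $M_{\frak p}$, using two elementary facts. First, for a finitely generated $R$-module $N$ one has $\mathrm{Supp}(N) \subseteq V(I)$ if and only if $N_{\frak p} = 0$ for every prime $\frak p$ with $I \not\subseteq \frak p$. Second, since localization is exact it commutes with quotients and with the colon construction: writing $N_i := \big((x_1,\ldots,x_{i-1})M : x_i\big)/(x_1,\ldots,x_{i-1})M$, which is precisely the kernel of multiplication by $x_i$ on $M/(x_1,\ldots,x_{i-1})M$, one gets $(N_i)_{\frak p} = \ker\big(M_{\frak p}/(x_1,\ldots,x_{i-1})M_{\frak p} \xrightarrow{\,x_i/1\,} M_{\frak p}/(x_1,\ldots,x_{i-1})M_{\frak p}\big)$. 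Combining the two remarks, $I$-filter regularity of $x_1,\ldots,x_t$ on $M$ is equivalent to the assertion that for every $i \le t$ and every prime $\frak p$ with $I \not\subseteq \frak p$, the element $x_i/1$ is a nonzerodivisor on $M_{\frak p}/(x_1,\ldots,x_{i-1})M_{\frak p}$. I would record this reformulation explicitly at the outset, as it makes both implications transparent.

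For the ``only if'' direction I would fix $\frak p \in \mathrm{Supp}(M) \setminus V(I)$ and $i \le t$ with $x_1,\ldots,x_i \in \frak p$, and read off from the reformulation that, for each $j \le i$, the element $x_j/1$ is a nonzerodivisor on $M_{\frak p}/(x_1,\ldots,x_{j-1})M_{\frak p}$ (legitimate since $I \not\subseteq \frak p$). To see that $x_1/1,\ldots,x_i/1$ is then a genuine $M_{\frak p}$-sequence one also needs $M_{\frak p}/(x_1,\ldots,x_i)M_{\frak p}$ to be nonzero, and this is exactly where the hypotheses $\frak p \in \mathrm{Supp}(M)$ and $x_1,\ldots,x_i \in \frak p R_{\frak p}$ enter, via Nakayama's lemma.

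For the converse I would verify the reformulated condition by a short case analysis on the prime $\frak p$ with $I \not\subseteq \frak p$. If $M_{\frak p}/(x_1,\ldots,x_{i-1})M_{\frak p} = 0$ or $x_i/1$ is a unit of $R_{\frak p}$ there is nothing to prove; this handles the case $\frak p \notin \mathrm{Supp}(M)$, the case that some $x_k$ with $k<i$ lies outside $\frak p$ (forcing $(x_1,\ldots,x_{i-1})M_{\frak p}=M_{\frak p}$), and the case $x_i \notin \frak p$. In the one remaining case one has $\frak p \in \mathrm{Supp}(M)$, $I \not\subseteq \frak p$, and $x_1,\ldots,x_i \in \frak p$, so the hypothesis of the lemma directly provides an $M_{\frak p}$-sequence $x_1/1,\ldots,x_i/1$, whence $x_i/1$ is a nonzerodivisor on $M_{\frak p}/(x_1,\ldots,x_{i-1})M_{\frak p}$. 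I expect the only real care to be needed in organizing this last case analysis — in particular, noticing that once an earlier $x_k$ escapes $\frak p$ the relevant quotient collapses and no regularity hypothesis is required there; the rest is formal manipulation of localizations.
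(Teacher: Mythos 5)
Your proof is correct. The paper does not actually prove Lemma \ref{regular seq} --- it is quoted from \cite[Proposition 2.2]{NS95} --- and your argument is exactly the standard one behind that citation: reformulate $I$-filter regularity, via exactness of localization, as ``$x_i/1$ is injective on $M_{\frak p}/(x_1,\ldots,x_{i-1})M_{\frak p}$ for every $i\le t$ and every $\frak p\not\supseteq I$,'' then run the case analysis in one direction and use Nakayama in the other to get properness of $(x_1,\ldots,x_i)M_{\frak p}$, which is indeed the only point requiring care.
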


\begin{corollary}\label{torsion} Let $x_1, \ldots, x_t \in I$ be an $I$-filter regular sequence on $M$. Then $H^i_{(x_1, \ldots, x_t)}(M)$ is $I$-torsion for all $i<t$.
\end{corollary}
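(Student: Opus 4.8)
The plan is to establish this by localizing at primes outside $V(I)$ and invoking Lemma \ref{regular seq}. Recall the standard fact that, over a Noetherian ring, an $R$-module $N$ is $I$-torsion (that is, $\Gamma_I(N)=N$) if and only if $\Supp(N)\subseteq V(I)$, equivalently $N_\fp=0$ for every $\fp\in\Spec R\setminus V(I)$: any cyclic submodule $Rn$ of an $I$-torsion module satisfies $V(\Ann n)=\Supp(Rn)\subseteq V(I)$, so $I^kn=0$ for some $k$, and the converse is immediate. Since local cohomology commutes with localization \cite{BS98}, writing $\fa:=(x_1,\ldots,x_t)$ we have $\big(H^i_{\fa}(M)\big)_\fp\cong H^i_{\fa R_\fp}(M_\fp)$ for every $\fp$, so it is enough to show that this module vanishes whenever $i<t$ and $\fp\notin V(I)$.

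So fix $\fp\in\Spec R\setminus V(I)$. If $\fp\notin\Supp(M)$ then $M_\fp=0$ and there is nothing to prove, so assume $\fp\in\Supp(M)\setminus V(I)$. I would then split into two cases according to whether all the $x_j$ lie in $\fp$. If some $x_j\notin\fp$, then $x_j/1$ is a unit of $R_\fp$, hence $\fa R_\fp=R_\fp$ and $H^i_{\fa R_\fp}(M_\fp)=0$ for \emph{all} $i$. If instead $x_1,\ldots,x_t\in\fp$, then Lemma \ref{regular seq} applies (with $i=t$) and tells us that $x_1/1,\ldots,x_t/1$ is an $M_\fp$-sequence; consequently $\grade(\fa R_\fp,M_\fp)\ge t$, and the vanishing of local cohomology below the grade \cite{BS98} gives $H^i_{\fa R_\fp}(M_\fp)=0$ for all $i<t$. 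In either case $\big(H^i_{\fa}(M)\big)_\fp=0$ for $i<t$, and the characterization of $I$-torsion modules above finishes the argument.

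There is no serious obstacle here; the only point that needs a moment's care is the case split at the level of $R_\fp$ — recognizing that once some $x_j$ escapes $\fp$ the ideal $\fa$ becomes the unit ideal locally, so the hypothesis of $I$-filter regularity is genuinely used only at the primes containing all of $x_1,\ldots,x_t$, and there Lemma \ref{regular seq} converts it into an honest regular sequence of length $t$.
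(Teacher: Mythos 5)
Your proof is correct and follows essentially the same route as the paper: localize at a prime $\fp\notin V(I)$, note that either some $x_j$ is a unit in $R_\fp$ (so $\fa R_\fp=R_\fp$) or Lemma \ref{regular seq} makes $x_1,\ldots,x_t$ an $M_\fp$-regular sequence, and then use vanishing of local cohomology below the grade. The only cosmetic difference is that the paper cites this vanishing as \cite[Theorem 6.2.7]{BS98} rather than phrasing it via $\grade(\fa R_\fp,M_\fp)\ge t$.
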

\begin{proof}
For each $\frak p \in \mathrm{Supp}(M) \setminus V(I)$ we have either $(x_1, \ldots x_t)R_{\frak p} = R_{\frak p}$ or $x_1, \ldots, x_t$ is an $M_{\frak p}$-regular sequence by Lemma \ref{regular seq}. For the first case we have
$$(H^i_{(x_1, \ldots, x_t)}(M))_{\frak p} \cong H^i_{(x_1, \ldots, x_t)R_{\frak p}}(M_{\frak p}) = 0$$ for all $i \ge 0$.  For the second case we have
$$(H^i_{(x_1, \ldots, x_t)}(M))_{\frak p} \cong H^i_{(x_1, \ldots, x_t)R_{\frak p}}(M_{\frak p}) = 0$$ for all $i < t$ by the Grothendieck vanishing theorem \cite[Theorem 6.2.7]{BS98}. Therefore we have $(H^i_{(x_1, \ldots, x_t)}(M))_{\frak p} \cong 0$ for all $i < t$ and for all $\frak p \in \mathrm{Spec}(R) \setminus V(I)$. So $H^i_{(x_1, \ldots, x_t)}(M)$ is $I$-torsion for all $i<t$.
\end{proof}
It is well-known that local cohomology $H^i_{(x_1, \ldots, x_t)}(M)$ agrees with the $i$-th cohomology of the \v{C}ech complex with respect to the sequence $x_1, \ldots, x_t$
$$0 \to M \overset{d^0}{\longrightarrow} \bigoplus_i M_{x_i} \overset{d^1}{\longrightarrow} \bigoplus_{i<j} M_{x_ix_j}   \overset{d^2}{\longrightarrow} \cdots  \overset{d^{t-1}}{\longrightarrow} M_{x_1 \ldots x_t} \to 0. $$
The following simple fact plays an important role in our proof.
\begin{lemma}\label{vanish}
Let $x \in I$ be any element of $R$ and $M$ an $R$-module. Then $H^i_I(M_x) = 0$ for all $i \ge 0$.
\end{lemma}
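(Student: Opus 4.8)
The plan is to observe that in $M_x$ the element $x$, hence the whole ideal $I$, has already been made invertible, so there is no room for $I$-torsion cohomology. The first move is to dispose of the degenerate case: if $x$ is nilpotent then $R_x = 0$, so $M_x = 0$ and the assertion is trivial. Henceforth assume $x$ is not nilpotent; then multiplication by $x$ is a bijection on $M_x$, so $M_x$ is naturally a module over $R_x$ whose $R$-structure is restriction of scalars along the canonical map $R \to R_x$.

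For the main step I would argue as follows. Since $H^i_I(-)$ is an $R$-linear functor and multiplication by $x$ is invertible on $M_x$, multiplication by $x$ is invertible on $H^i_I(M_x)$ as well; thus $H^i_I(M_x)$ is again an $R_x$-module and is therefore unchanged by localizing at $x$. On the other hand, local cohomology commutes with localization (\cite{BS98}), so
\[
H^i_I(M_x)\;\cong\;\bigl(H^i_I(M_x)\bigr)_x\;\cong\;H^i_{IR_x}\bigl((M_x)_x\bigr)\;=\;H^i_{IR_x}(M_x).
\]
But $x\in I$ is a unit in $R_x$, so $IR_x = R_x$, and local cohomology with support in the unit ideal vanishes in every degree, since the corresponding torsion functor is identically zero. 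Combining these gives $H^i_I(M_x)=0$ for all $i$.

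I do not expect any genuine obstacle here; this is an elementary lemma, and the only points requiring a word of justification are the reduction to $x$ non-nilpotent and the two standard facts invoked (that $H^i_I$ commutes with localization and that it kills the unit ideal). An alternative proof, closer to the \v{C}ech-complex language recalled just above, writes $I=(x,b_1,\dots,b_s)$ and notes that the \v{C}ech complex of $M_x$ with respect to $x,b_1,\dots,b_s$ is the tensor product over $R$ of the two-term complex $\bigl(M_x\xrightarrow{\ \cong\ }(M_x)_x\bigr)$ with the \v{C}ech complex of $M_x$ on $b_1,\dots,b_s$; the former is contractible because its differential is an isomorphism, hence so is the whole complex, and all of its cohomology vanishes.
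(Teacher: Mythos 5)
Your argument is correct, but it finishes differently from the paper. Both proofs start from the same observation: multiplication by $x$ is bijective on $M_x$, hence (by $R$-linearity of the functor $H^i_I(-)$) bijective on $H^i_I(M_x)$. The paper then concludes in one line without any base change: $H^i_I(M_x)$ is $I$-torsion, hence $(x)$-torsion since $x\in I$, and an $(x)$-torsion module on which $x$ acts bijectively is zero. You instead promote $H^i_I(M_x)$ to an $R_x$-module and invoke the flat base change isomorphism $\bigl(H^i_I(N)\bigr)_x \cong H^i_{IR_x}(N_x)$ together with the vanishing of local cohomology for the unit ideal; this is valid, but it leans on a heavier standard tool, whereas the paper's finish uses only the bare fact that local cohomology modules are $I$-torsion, which fits the stated goal of the section (an elementary, self-contained argument). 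Two small remarks: the case split on $x$ nilpotent is unnecessary, since then $M_x=0$ and there is nothing to prove, and more to the point the torsion (or base change) argument never needs $x$ to be a nonzerodivisor; and your alternative \v{C}ech-complex proof (the two-term complex $M_x\to (M_x)_x$ is contractible because its differential is an isomorphism, and tensoring preserves contractibility) is also sound, given the standard fact, taken as known in the paper as well, that the \v{C}ech complex computes $H^i_I$ for the finitely generated ideal $I=(x,b_1,\dots,b_s)$.
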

\begin{proof} The multiplication map $M_x \overset{x}{\to} M_x$ is an isomorphism. It induces isomorphism maps $H^i_I(M_x) \overset{x}{\to} H^i_I(M_x)$ for all $i \ge 0$. But $H^i_I(M_x)$ is $I$-torsion, so it is $(x)$-torsion since $x \in I$. Therefore $H^i_I(M_x) = 0$ for all $i \ge 0$.
\end{proof}
By Corollary \ref{torsion} and Lemma \ref{vanish} the theorem of Nagel and Schenzel is a special case of the following theorem.
\begin{theorem}\label{general Thm} Let $I$ be an ideal of a Noetherian ring $R$ and $M$ an $R$-module. Suppose we have a complex of $R$-modules
$$0 \to M = M^0 \xrightarrow{d^0} M^1 \xrightarrow{d^1} M^2 \xrightarrow{d^2} \cdots \xrightarrow{d^{t-1}} M^t \to 0, \quad (\star) $$
where $H^j_I(M^i) = 0$ for all $i>0$ and for all $j \ge 0$. Suppose that the cohomology $H^i:= \mathrm{Ker}(d^i)/\mathrm{Im}(d^{i-1})$ is $I$-torsion for all $i <t$. Then we have the following isomorphism
\[
H^i_{I}(M) \cong
\begin{cases}
H^i&\text{ if } i<t\\
H^{i-t}_{I}(H^t) &\text{ if } i \ge t.
\end{cases}
\]
In particular, if $H^t$ is also $I$-torsion then $H^i_I(M) \cong H^i$ for all $i \ge 0$.
\end{theorem}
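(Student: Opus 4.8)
The plan is to induct on $t$, the inductive step consisting of peeling off the bottom differential $d^0$ of $(\star)$ — replacing the pair $M^0\to M^1$ by the single module $\coker(d^0)$ — and transferring the outcome through long exact sequences of local cohomology; this uses only standard manipulations and no spectral sequences. There is nothing to prove when $t=0$, so assume $t\ge1$ and that the theorem holds with $t$ replaced by $t-1$.

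Put $B:=\im(d^0)\subseteq M^1$ and $N:=\coker(d^0)=M^1/B$. Since $d^1d^0=0$, the map $d^1$ induces a homomorphism $N\to M^2$, and the complex
\[
0\to N\to M^2\xrightarrow{d^2}\cdots\xrightarrow{d^{t-1}}M^t\to0\qquad(\star\star)
\]
has, by direct inspection, cohomology $H^1$ in degree $0$ and $H^{k+1}$ in degree $k$ for $1\le k\le t-1$. Hence $(\star\star)$, read with $t-1$ in place of $t$, satisfies the hypotheses of the theorem: its modules in positive degrees ($M^2,\dots,M^t$) have vanishing $I$-local cohomology, its cohomology in degrees $0,\dots,t-2$ (namely $H^1,\dots,H^{t-1}$) is $I$-torsion, and its top cohomology is $H^t$. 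The induction hypothesis then gives
\[
H^i_I(N)\cong
\begin{cases}
H^{i+1}&\text{if } i\le t-2,\\
H^{i-t+1}_I(H^t)&\text{if } i\ge t-1.
\end{cases}
\]

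It remains to relate $M=M^0$ to $N$ through the two short exact sequences
\[
0\to H^0\to M^0\to B\to0,\qquad 0\to B\to M^1\to N\to0,
\]
in which $H^0=\ker(d^0)$ is $I$-torsion by hypothesis. From the long exact sequence of local cohomology of the second sequence, together with $H^j_I(M^1)=0$ for all $j$, one reads off $H^0_I(B)=0$ and connecting isomorphisms $H^j_I(B)\cong H^{j-1}_I(N)$ for all $j\ge1$. From the long exact sequence of the first sequence, together with the fact that $H^0$ is $I$-torsion (so $H^0_I(H^0)=H^0$ and $H^j_I(H^0)=0$ for $j\ge1$), one obtains $H^0_I(M^0)\cong H^0$ — using $H^0_I(B)=0$ — and $H^j_I(M^0)\cong H^j_I(B)$ for all $j\ge1$. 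Composing, $H^0_I(M)\cong H^0$ and $H^j_I(M)\cong H^{j-1}_I(N)$ for $j\ge1$; substituting the displayed formula for $H^{j-1}_I(N)$ and separating the ranges $1\le j\le t-1$ (where it is $H^j$) and $j\ge t$ (where it is $H^{j-t}_I(H^t)$) yields precisely the claimed isomorphism. The last sentence is then immediate: if $H^t$ is $I$-torsion, $H^{i-t}_I(H^t)$ is $H^t=H^i$ for $i=t$ and vanishes, as does $H^i$, for $i>t$.

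The point requiring the most care is the elementary but fiddly identification of the degree-$0$ cohomology of $(\star\star)$: one checks that the induced map $M^1/\im(d^0)\to M^2$ has kernel exactly $\ker(d^1)/\im(d^0)=H^1$, and then that $(\star\star)$ indeed meets every hypothesis of the theorem with $t-1$ for $t$. Everything else is a diagram chase. Since each isomorphism above is assembled from connecting homomorphisms, all the identifications are functorial in $(\star)$, which is what makes the result applicable to the Frobenius action in Section~3. (Alternatively the statement follows from the Grothendieck spectral sequence $E_2^{p,q}=H^p_I(H^q)\Rightarrow H^{p+q}_I(M)$, whose nonzero terms lie only in the column $p=0$ with $q<t$ and in the row $q=t$, forcing degeneration at $E_2$; but we avoid spectral sequences here, in keeping with the purpose of this section.)
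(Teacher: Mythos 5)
Your proof is correct, and it reaches the same goal by a mildly different route than the paper. The paper argues in one pass: it splits $(\star)$ into all the short exact sequences $0\to K^j\to M^j\to L^{j+1}\to 0$ and $0\to L^j\to K^j\to H^j\to 0$ (with $L^j=\im d^{j-1}$, $K^j=\ker d^j$) simultaneously, and chains the resulting long exact sequences into the explicit string $H^i_I(M)\cong H^i_I(L^1)\cong H^{i-1}_I(L^2)\cong\cdots\cong H^1_I(L^i)\cong H^i$ for $i<t$, and likewise down to $H^{i-t+1}_I(L^t)\cong H^{i-t}_I(H^t)$ for $i\ge t$. You instead induct on $t$, truncating at the bottom: you replace $M^0\to M^1$ by $N=\coker(d^0)$ and apply the inductive hypothesis to $0\to N\to M^2\to\cdots\to M^t\to 0$, then transfer back through the two short exact sequences $0\to H^0\to M^0\to B\to 0$ and $0\to B\to M^1\to N\to 0$. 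Your verifications are right: the truncated complex has cohomology $H^1,\dots,H^t$ in degrees $0,\dots,t-1$, and — the key point you correctly exploit — the theorem imposes no local-cohomology hypothesis on the degree-zero module, so $N$ being a quotient of $M^1$ causes no trouble; the identifications $H^0_I(M)\cong H^0$ and $H^j_I(M)\cong H^{j-1}_I(N)$ for $j\ge1$ follow exactly as you say. The trade-off is organizational: your induction keeps the bookkeeping local (one pair of exact sequences per step) and makes the recursive structure of the statement visible, while the paper's direct chain exhibits all the isomorphisms at once without any induction; the dimension-shifting mechanism (vanishing of $H^j_I(M^i)$ for $i>0$ plus torsionness of $H^0,\dots,H^{t-1}$) is identical in both. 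Your closing remark on functoriality of the identifications is a useful observation for the application in Section~3, though it is not part of the statement itself.
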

\begin{proof} It is sufficient to prove the first assertion. For all $j \ge 0$ we set $L^j: = \mathrm{Im}(d^{j-1})$ and $K^j := \mathrm{Ker}(d^j)$. Hence $H^j \cong K^j/L^j$ for all $j \ge 0$. We split the complex $(\star)$ into short exact sequences
\[
0 \to H^0 \to M \to L^1 \to 0  \tag{$A_0$}
\]
\[
0 \to L^1 \to K^1 \to H^1 \to 0 \tag{$B_1$}
\]
\[
0 \to K^1 \to M^1 \to L^2 \to 0  \tag{$A_1$}
\]
$$ \cdots $$
\[
0 \to L^j \to K^j \to H^j \to 0 \tag{$B_j$}
\]
\[
0 \to K^j \to M^j \to L^{j+1} \to 0 \tag{$A_j$}
\]
$$ \cdots $$
\[
0 \to L^{t-1} \to K^{t-1} \to H^{t-1} \to 0 \tag{$B_{t-1}$}
\]
\[
0 \to K^{t-1} \to M^{t-1} \to L^{t} \to 0 \tag{$A_{t-1}$}
\]
\[
0 \to L^t \to M^t \to H^t \to 0. \tag{$B_t$}
\]
Since $L^j$ and $K^j$ are submodules of $M^j$ for all $j \ge 1$, we have $H^0_I(L^j) \cong H^0_I(K^j) = 0$ for all $j \ge 1$. We also note that $H^j$ is $I$-torsion for all $j <t$ by the assumption, so $H^0_I(H^j) = H^j$ and $H^i_I(H^j) \cong 0$ for all $j<t$ and for all $i \ge 1$.\\
Now applying the functor $H^i_I(-)$ to the short exact sequence $(A_0)$ and using the above observations we have
$$H^0_I(M) \cong H^0$$
and
\[
H^i_I(M) \cong H^i_I(L^1) \tag{1}
\]
for all $i \ge 1$.\\
For each $j = 1, \ldots, t-1$, applying the local cohomology functor $H^i_I(-)$ to the short exact sequence $(A_j)$ we have $H^1_I(K^j) \cong 0$ and the isomorphism
\[
H^i_I(L^{j+1}) \cong H^{i+1}_I(K^j) \tag{$C_j$}
\]
for all $i \ge 1$. Furthermore, if we apply $H^i_I(-)$ for the short exact sequence $(B_j)$, then we obtain the short exact sequence
$$0 \to  H^0_I(H^j) \cong H^j \to H^1_I(L^j) \to H^1_I(K^j) \to 0,$$
and the isomorphism
\[
H^i_I(L^j) \cong H^i_I(K^j) \tag{$D_j$}
\]
for all $i \ge 2$. Note that $H^1_I(K^j) = 0$ as above, so
\[
H^j \cong H^1_I(L^j). \tag{2}
\]
By the isomorphisms $(C_j)$ and $(D_j)$ we have $H^i_I(L^{j+1}) \cong H^{i+1}_I(L^{j})$ for all $j = 1, \ldots, t-1$ and for all $i \ge 1$. We next show that $H^i_I(M) \cong H^i$ for all $i = 1, \ldots, t-1$. Indeed, using isomorphisms $(1), (2)$ and the above isomorphism we have
$$H^i_I(M) \overset{(1)}{\cong} H^i_I(L^1) \cong H^{i-1}_I(L^2) \cong \cdots \cong H^1_I(L^i) \overset{(2)}{\cong} H^i.$$
Therefore, we have showed the isomorphisms $H^i_I(M) \cong H^i$ for all $i = 0, \ldots, t-1$. Finally, for $i \ge t$ by similar arguments we have
$$H^i_I(M) \overset{(1)}{\cong} H^i_I(L^1) {\cong} H^{i-1}_I(L^2) {\cong} \cdots {\cong} H^{i-t+1}_I(L^t).$$
On the other hand, by applying the functor $H^i_I(-)$ to the short exact sequence $(B_t)$ we have
$$H^{i-t}_I(H^t) \cong  H^{i-t+1}_I(L^t)$$
for all $i \ge t$. Thus $H^i_I(M)  \cong H^{i-t}_I(H^t)$for all $i \ge t$, and we finish the proof.
\end{proof}
\begin{remark} Let $I = (x_1, \ldots, x_t)$ be an ideal of $R$. It is not hard to show that the cohomology of \v{C}ech complex $\breve{C}(x_1, \ldots, x_t;M)$ is always $I$-torsion for any $R$-module $M$. By the last assertion of Theorem \ref{general Thm} we obtain the well-known fact $H^i_I(M) \cong H^i(\breve{C}(x_1, \ldots, x_t;M))$ for all $i \ge 0$.
\end{remark}

\section{On the Frobenius test exponent for parameter ideals}
 In this section, let $R$ be a Noetherian ring containing a field of characteristic $p>0$. Let $F:R \to R, x \mapsto x^p$ denote the Frobenius endomorphism. If we want to notationally distinguish the source and target of the $e$-th Frobenius endomorphism $F^e: R \xrightarrow{x \mapsto x^{p^e}} R$, we will use $F_*^e(R)$ to denote the target. $F_*^e(R)$ is an $R$-bimodule, which is the same as $R$ as
an abelian group and as a right $R$-module, that acquires its left $R$-module structure via the $e$-th Frobenius
endomorphism $F^e$. By definition the $e$-th Frobenius endomorphism $F^e: R \to F_*^e(R)$ sending $x$ to $F_*^e(x^{p^e}) = x \cdot F_*^e(1)$ is an $R$-homomorphism.
\begin{definition}[\cite{H96}] Let $I$ be an ideal of $R$ we define
\begin{enumerate}
\item The {\it $e$-th Frobenius power} of $I$ is $I^{[p^e]} = (x^{p^e} \mid x \in I)$.
  \item The {\it Frobenius closure} of $I$, $I^F = \{x \mid  x^{p^e} \in I^{[p^e]} \text{ for some } e \ge 0\}$.
\end{enumerate}
\end{definition}
\begin{remark} An element $x \in I^F$ if it is contained in the kernel of the composition
 $$R \to R/I \cong R/I \otimes_R R \xrightarrow{\mathrm{id} \otimes F^e} R/I \otimes_R F_*^e(R)$$
 for some $e \ge 0$. Moreover $R$ is Noetherian, so $I^F$ is finitely generated. Therefore there exists an integer $e_0$ such that
 $$I^F = \mathrm{Ker}(R \to R/I \cong R/I \otimes_R R \xrightarrow{\mathrm{id} \otimes F^e} R/I \otimes_R F_*^e(R))$$
for all $e \ge e_0$.
\end{remark}
By the above discussion for every ideal $I$ there is an integer $e$ (depending on $I$) such that $(I^F)^{[p^e]} = I^{[p^e]}$. A problem of Katzman and Sharp \cite[Introduction]{KS06} asks in its strongest form: does there exist a number $e$, depending only on the ring $R$, such that, for every ideal $I$ we have $(I^F)^{[p^e]} = I^{[p^e]}$. A positive answer to
this question, together with the actual knowledge of a bound for $e$, would give an algorithm to compute the Frobenius closure $I^F$. We call such a number $e$ a {\it Frobenius test exponent} for the ring $R$. Unfortunately, Brenner \cite{B06} gave two-dimensional normal standard graded domains with no Frobenius test exponent. In contrast, Katzman and Sharp showed the existence of Frobenius test exponent if we restrict to class of parameter ideals in a Cohen-Macaulay ring. It leads the following question.
\begin{question}\label{Katzman Sharp} Let $(R, \fm)$ be an (equidimensional) local ring of prime characteristic $p$. Then does there exist an integer $e$ such that for every parameter ideal $\fq$ of $R$ we have $(\fq^F)^{[p^e]} = \fq^{[p^e]}$?
\end{question}
We definite the {\it Frobenius test exponent for parameter ideals} of $R$, $Fte(R)$, the smallest integer $e$ satisfying the above condition and $Fte(R) = \infty$ if we have no such $e$. Question \ref{Katzman Sharp} has affirmative answers when $R$ is either generalized Cohen-Macaulay by \cite{HKSY06} or $F$-nilpotent by \cite{Q18}, and is open in general. The Frobenius test exponent for parameter ideals is closely related with an invariant defined in terms of Frobenius action on local cohomology. For any ideal $I = (x_1, \ldots, x_t)$, the Frobenius endomorphism $F:R \to R$ and its localizations induce a natural Frobenius action on local cohomology $F:H^i_I(R) \to H^i_{I^{[p]}}(R) \cong H^i_{I}(R)$ for all $i \ge 0$. There is a very useful way of describing the top local cohomology. It can be given as the direct limit of Koszul cohomologies
$$
H^t_I(R) \cong \lim_{\longrightarrow} R/(x_1^n, \ldots, x_t^n),
$$
with the map in the system $\varphi_{n, m}: R/(x_1^n, \ldots, x_t^n) \to R/(x_1^m, \ldots, x_t^m)$ is multiplication by $(x_1 \ldots x_t)^{m - n}$ for all $m \ge n$. Then for each $\overline{a} \in H^t_{I}(R)$, which is the canonical image of $a+(x_1^n, \ldots, x_t^n)$, we find that $F(\overline{a})$ is the canonical image of $a^p +(x_1^{pn}, \ldots, x_t^{pn})$.

Notice that $H^i_{\fm}(R)$ is always Artinian for all $i \ge 0$. Let $A$ be an Artinian $R$-module with a Frobenius action $F: A \to A$. Then we define the {\it Frobenius closure} $0^F_A$ of the zero submodule of $A$ is the submodule of $A$ consisting all elements $z$ such that $F^e(z) = 0$ for some $e \ge 0$. $0^F_A$ is the nilpotent part of $A$ by the Frobenius action. By \cite[Proposition 1.11]{HS77} and \cite[Proposition 4.4]{L97} there exists a non-negative integer $e$ such that $0^F_A = \ker (A \overset{F^e}{\longrightarrow} A)$ (see also \cite{Sh07}). The smallest of such integers is called the {\it Hartshorne-Speiser-Lyubeznik number} of $A$ and denoted by $HSL(A)$. We define the {\it Hartshorne-Speiser-Lyubeznik number} of a local ring $(R, \frak m)$ as follows
$$HSL(R): = \min \{ e \mid   0^F_{H^i_{\fm}(R)} =   \ker (H^i_{\fm}(R) \overset{F^e}{\longrightarrow} H^i_{\fm}(R)) \text{ for all } i = 0, \ldots, d\}.$$
If $R$ is Cohen-Macaulay, then Katzman and Sharp \cite{KS06} showed that $Fte(R)$ is just $HSL(R)$. In this paper we will show that $Fte(R) \ge HLS(R)$ for any local ring $R$. We need the following result.
\begin{proposition}\label{limit Frobenius} Let $x_1, \ldots, x_t$ be a sequence of elements in $R$. Then we have
$$0^F_{H^t_{(\underline{x})}(R)} \cong  \varinjlim_n \frac{(x_1^n, \ldots, x_t^n)^F}{(x_1^n, \ldots, x_t^n)}.$$
\end{proposition}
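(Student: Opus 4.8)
The plan is to realize both sides as submodules of the top local cohomology $H^t_{(\underline{x})}(R)\cong\varinjlim_n R/\fq_n$, where I write $\fq_n:=(x_1^n,\ldots,x_t^n)$ and the structure map $\varphi_{n,m}\colon R/\fq_n\to R/\fq_m$ (for $m\ge n$) is multiplication by $(x_1\cdots x_t)^{m-n}$, and then to check that the two submodules coincide.

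First I would verify that the quotients $\fq_n^F/\fq_n\subseteq R/\fq_n$ are preserved by the transition maps, so that $\{\fq_n^F/\fq_n\}_n$ is a subsystem of $\{R/\fq_n\}_n$. Concretely, if $a\in\fq_n^F$, say $a^{p^e}\in\fq_n^{[p^e]}=\fq_{np^e}$, then $\big((x_1\cdots x_t)^{m-n}a\big)^{p^e}=(x_1\cdots x_t)^{(m-n)p^e}a^{p^e}$ lies in $\fq_m^{[p^e]}$, since $(x_1\cdots x_t)^{(m-n)p^e}x_i^{np^e}$ is divisible by $x_i^{mp^e}$; hence $\varphi_{n,m}$ carries $\fq_n^F/\fq_n$ into $\fq_m^F/\fq_m$. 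Because filtered colimits of modules are exact, applying $\varinjlim_n$ to the inclusions $\fq_n^F/\fq_n\hookrightarrow R/\fq_n$ yields a monomorphism $\varinjlim_n\fq_n^F/\fq_n\hookrightarrow H^t_{(\underline{x})}(R)$, and it remains to identify its image with $0^F_{H^t_{(\underline{x})}(R)}$.

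For one inclusion: a class in $\varinjlim_n\fq_n^F/\fq_n$ is represented by $a+\fq_n$ with $a^{p^e}\in\fq_n^{[p^e]}$ for some $e$; by the explicit description of the Frobenius action recalled just before the statement, $F^e$ applied to the class $\overline a\in H^t_{(\underline{x})}(R)$ is the canonical image of $a^{p^e}+\fq_{np^e}$, which is zero, so $\overline a\in 0^F$. The reverse inclusion is the substantive step. Given $\overline a\in 0^F_{H^t_{(\underline{x})}(R)}$ represented by $a+\fq_n$ with $F^e(\overline a)=0$, I would unwind the meaning of vanishing in the colimit: $F^e(\overline a)$ is the image of $a^{p^e}+\fq_{np^e}$, so it is zero exactly when there is an $m\ge np^e$ with $(x_1\cdots x_t)^{m-np^e}a^{p^e}\in\fq_m$. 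Setting $b:=(x_1\cdots x_t)^{m-n}a$ (note $m\ge n$), the claim is $b\in\fq_m^F$, which follows from $b^{p^e}=(x_1\cdots x_t)^{m(p^e-1)}\cdot(x_1\cdots x_t)^{m-np^e}a^{p^e}\in(x_1\cdots x_t)^{m(p^e-1)}\fq_m$ together with $(x_1\cdots x_t)^{m(p^e-1)}x_i^{m}=x_i^{mp^e}\prod_{j\ne i}x_j^{m(p^e-1)}\in\fq_m^{[p^e]}$. Since $\varphi_{n,m}(a+\fq_n)=b+\fq_m$ represents the same class $\overline a$, this exhibits $\overline a$ in the image of $\fq_m^F/\fq_m$, and the two inclusions give the asserted isomorphism.

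I expect the only real obstacle to be the exponent bookkeeping in this last paragraph — choosing $m$ correctly and balancing the powers of $x_1\cdots x_t$ against $x_i^m$; everything else is formal, relying on exactness of filtered colimits and on the concrete presentation of $H^t_{(\underline{x})}(R)$ together with its Frobenius action recorded just above the statement.
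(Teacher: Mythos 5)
Your proof is correct and follows essentially the same route as the paper: both sides are realized inside the colimit presentation $H^t_{(\underline{x})}(R)\cong\varinjlim_n R/(x_1^n,\ldots,x_t^n)$ with the Frobenius action described termwise, and the two inclusions are checked. The only difference is presentational: where the paper invokes the commutative diagram of Frobenius maps between the direct systems and arranges the vanishing to occur at a level that is a multiple of $p^e$, you do the equivalent exponent bookkeeping by hand at an arbitrary level $m\ge np^e$.
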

\begin{proof} For each $e \ge 0$ the Frobenius action $F^e$ on $H^t_{(\underline{x})}(R)$ is the direct limit of the following commutative diagram:
$$
\begin{CD}
R/(\underline{x}) @>\varphi_{1,2}>>  R/(\underline{x}^{[2]})  @>\varphi_{2,3}>> R/(\underline{x}^{[3]}) @>>> \cdots \\
@VF^eVV @VF^eVV @VF^eVV (\star \star)\\
R/(\underline{x}^{[p^e]}) @>\varphi_{p^e,2p^e}>> R/(\underline{x}^{[2p^e]}) @>\varphi_{2p^e,3p^e}>> R/(\underline{x}^{[3p^e]}) @>>> \cdots
\end{CD}
$$
where each vertical map is the Frobenius homomorphism. For each $a \in (x_1^n, \ldots, x_t^n)^F$, it is clear that $a + (x_1^n, \ldots, x_t^n)$ maps to an element in $0^F_{H^t_{(\underline{x})}(R)}$. Thus we have an injection
$$ \varinjlim_n \frac{(x_1^n, \ldots, x_t^n)^F}{(x_1^n, \ldots, x_t^n)} \hookrightarrow 0^F_{H^t_{(\underline{x})}(R)}.$$
For the surjection, let $\overline{a}$ be any element of $0^F_{H^t_{(\underline{x})}(R)}$. By the system $(\star \star)$ there is an element $a \in R$ and an integer $n_1$ such that $a + (x_1^{n_1}, \ldots, x_t^{n_1})$ maps to $\overline{a}$. Let $e$ be an integer such that $F^e(\overline{a}) = 0$. Hence the image of $a^{p^e} + (x_1^{n_1p^e}, \ldots, x_t^{n_1p^e})$ is the zero in the limit. We can choose an integer $n_2 > n_1$ such that
$$\varphi_{n_1p^e, n_2p^e}(a^{p^e} + (x_1^{n_1p^e}, \ldots, x_t^{n_1p^e})) = 0 \in R/(x_1^{n_2p^e}, \ldots, x_t^{n_2p^e}).$$
Using the commutative diagram $(\star \star)$ we have
$$F^e(\varphi_{n_1, n_2}(a + (x_1^{n_1}, \ldots, x_t^{n_1}))) = \varphi_{n_1p^e, n_2p^e}(F^e(a + (x_1^{n_1}, \ldots, x_t^{n_1}))) = 0.$$
Therefore $\varphi_{n_1, n_2}(a + (x_1^{n_1}, \ldots, x_t^{n_1})) \in (x_1^{n_2}, \ldots, x_t^{n_2})^F/(x_1^{n_2}, \ldots, x_t^{n_2})$. Moreover this element maps to $\overline{a}$. This completes the proof.
\end{proof}
\begin{theorem}\label{inequality} Let $(R, \fm)$ be a local ring of positive characteristic $p>0$ of dimension $d$. Then $Fte(R) \ge HSL(R)$.

\end{theorem}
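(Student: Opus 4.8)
The plan is to show, for each $i \in \{0, 1, \ldots, d\}$, that $Fte(R) \ge HSL(H^i_{\fm}(R))$; maximizing over $i$ then gives the theorem. Fix $i$, set $e := HSL(H^i_{\fm}(R))$, and assume $e \ge 1$ (otherwise there is nothing to prove). First I would choose, via Remark \ref{filter}, an $\fm$-filter regular sequence $x_1, \ldots, x_d$ of length $d$; a routine dimension count shows such a sequence is automatically a system of parameters of $R$, and any initial segment of it is again $\fm$-filter regular. Put $\fq_0 := (x_1, \ldots, x_i)$. The case $t = i$ of the Nagel-Schenzel isomorphism (Theorem \ref{Nagel-Schenzel}) realizes $H^i_{\fm}(R) \cong H^0_{\fm}(H^i_{\fq_0}(R))$ as a submodule of the top local cohomology $H^i_{\fq_0}(R) \cong \varinjlim_m R/(x_1^m, \ldots, x_i^m)$, and this identification is compatible with the natural Frobenius actions; in particular the inclusion is Frobenius-equivariant, so $0^F_{H^i_{\fm}(R)} \subseteq 0^F_{H^i_{\fq_0}(R)}$ and a nonzero element of $H^i_{\fm}(R)$ stays nonzero in $H^i_{\fq_0}(R)$ after applying any power of $F$.

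Next I would extract a witness. Since $e$ is the least integer with $0^F_{H^i_{\fm}(R)} = \ker(F^e)$, there is $z \in 0^F_{H^i_{\fm}(R)}$ with $w := F^{e-1}(z) \ne 0$. Viewing $z$ as an element of $0^F_{H^i_{\fq_0}(R)}$ and applying Proposition \ref{limit Frobenius}, I obtain an integer $n$ and an element $a \in (x_1^n, \ldots, x_i^n)^F$ such that $z$ is the canonical image of $a + (x_1^n, \ldots, x_i^n)$ in $H^i_{\fq_0}(R)$. By the explicit description of the Frobenius on $H^i_{\fq_0}(R) = \varinjlim_m R/(x_1^m, \ldots, x_i^m)$ recalled in Section 3, the nonzero element $w = F^{e-1}(z)$ is the canonical image of $a^{p^{e-1}} + I$, where $I := (x_1^n, \ldots, x_i^n)^{[p^{e-1}]} = (x_1^{np^{e-1}}, \ldots, x_i^{np^{e-1}})$. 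As $R/I$ is exactly the $m = np^{e-1}$ term of the direct system and $w \ne 0$, this forces $a^{p^{e-1}} \notin I$.

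Finally I would turn $I$ into a genuine parameter ideal. In the Noetherian local ring $R/I$, the Krull intersection theorem gives $\bigcap_{M \ge 1}(x_{i+1}^M, \ldots, x_d^M)(R/I) = 0$, so there is an $M$ with $a^{p^{e-1}} \notin I + (x_{i+1}^M, \ldots, x_d^M)$. Choose $M'$ with $M'p^{e-1} \ge M$ and set $\fq := (x_1^n, \ldots, x_i^n, x_{i+1}^{M'}, \ldots, x_d^{M'})$, which is a parameter ideal since it is generated by powers of the system of parameters $x_1, \ldots, x_d$. Then $\fq^{[p^{e-1}]} = I + (x_{i+1}^{M'p^{e-1}}, \ldots, x_d^{M'p^{e-1}}) \subseteq I + (x_{i+1}^M, \ldots, x_d^M)$, so $a^{p^{e-1}} \notin \fq^{[p^{e-1}]}$; on the other hand $(x_1^n, \ldots, x_i^n) \subseteq \fq$ gives $a \in (x_1^n, \ldots, x_i^n)^F \subseteq \fq^F$, whence $a^{p^{e-1}} \in (\fq^F)^{[p^{e-1}]}$. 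Thus $(\fq^F)^{[p^{e-1}]} \ne \fq^{[p^{e-1}]}$, so $e-1$ is not a Frobenius test exponent for parameter ideals of $R$ (and neither is any smaller integer), i.e. $Fte(R) \ge e = HSL(H^i_{\fm}(R))$. The two points that need genuine justification, rather than being formal, are both in the first paragraph: that the Nagel-Schenzel identification is Frobenius-equivariant (so that Proposition \ref{limit Frobenius} really captures the Frobenius nilpotency of $H^i_{\fm}(R)$), and, less seriously, that an $\fm$-filter regular sequence of length $d$ is a system of parameters; after that, the descent from the non-parameter ideal $(x_1^n,\ldots,x_i^n)$ to $\fq$ via Krull's intersection theorem is the only remaining step requiring care, chiefly the bookkeeping that lets the extra exponents be chosen divisible by $p^{e-1}$.
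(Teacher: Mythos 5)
Your argument is correct and is essentially the paper's proof run in the contrapositive: the same ingredients --- a filter regular system of parameters, the Frobenius-compatible Nagel--Schenzel embedding $H^i_{\fm}(R)\hookrightarrow H^i_{(x_1,\ldots,x_i)}(R)$, Proposition \ref{limit Frobenius}, and Krull's intersection theorem applied to the parameter ideals $(x_1^n,\ldots,x_i^n,x_{i+1}^M,\ldots,x_d^M)$ --- appear there as well, only organized directly (assuming $Fte(R)=e_0$ one deduces $((x_1^n,\ldots,x_i^n)^F)^{[p^{e_0}]}=(x_1^n,\ldots,x_i^n)^{[p^{e_0}]}$ for all $n$ and hence that $F^{e_0}$ kills $0^F_{H^i_{(x_1,\ldots,x_i)}(R)}$) rather than through a single witness element and one offending parameter ideal. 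The Frobenius-equivariance of the Nagel--Schenzel identification that you flag is asserted at the same level of detail in the paper's own proof, so your write-up meets its standard of rigor.
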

\begin{proof} There is nothing to do if $Fte(R) = \infty$. Therefore we can assume henceforth that $Fte(R) = e_0$ a finite number. By the prime avoidance theorem we can choose a system of parameters $x_1, \ldots, x_d$ of $R$ that is also a filter regular sequence. For all $t \le d$ and all $n \ge 1$ we have
\begin{eqnarray*}
((x_1^n, \ldots, x_t^n )^F)^{[p^{e_0}]} &\subseteq & \bigcap_{m \ge 1} ((x_1^n, \ldots, x_t^n, x_{t+1}^m, \ldots, x_d^m )^F)^{[p^{e_0}]} \\
 &= & \bigcap_{m \ge 1} (x_1^n, \ldots, x_t^n, x_{t+1}^m, \ldots, x_d^m )^{[p^{e_0}]}\\
 &= & (x_1^n, \ldots, x_t^n)^{[p^{e_0}]},
\end{eqnarray*}
where the first equation follows from the definition of Frobenius test exponent, and the second equation follows from Krull's intersection theorem. Hence
$$((x_1^n, \ldots, x_t^n )^F)^{[p^{e_0}]}  =  (x_1^n, \ldots, x_t^n)^{[p^{e_0}]}$$
for all $t \le d$ and for all $n \ge 1$. By Proposition \ref{limit Frobenius} we have
$$0^F_{H^t_{(x_1, \ldots, x_t)}(R)} \cong  \varinjlim_n\frac{(x_1^n, \ldots, x_t^n)^F}{(x_1^n, \ldots, x_t^n)}.$$
Following the above observation we have
$$\frac{(x_1^n, \ldots, x_t^n)^F}{(x_1^n, \ldots, x_t^n)} \xrightarrow{F^{e_0}} \frac{(x_1^{np^{e_0}}, \ldots, x_t^{np^{e_0}})^F}{(x_1^{np^{e_0}}, \ldots, x_t^{np^{e_0}})}$$
is the zero map for all $n \ge 1$ and for all $t \le d$, and so are the limit maps. Therefore $F^{e_0}(0^F_{H^t_{(x_1, \ldots, x_t)}(R)}) = 0$ for all $t \le d$. On the other hand by the Nagel-Schenzel theorem we have $H^t_{\fm}(R) \cong H^0_{\fm}(H^t_{(x_1, \ldots, x_t)}(R))$. Thus we can consider $H^t_{\fm}(R)$ as a submodule of $H^t_{(x_1, \ldots, x_t)}(R)$ that is compatible with Frobenius actions. Therefore $F^{e_0}(0^F_{H^t_{\fm}(R)}) = 0$ for all $t \le d$, that is $HSL(R) \le e_0$.
The proof is complete.
\end{proof}
We next show that the Frobenius test exponent has a good behavior under localization.
\begin{proposition}\label{localization}  Let $(R, \fm)$ be a local ring of positive characteristic $p>0$ of dimension $d$. Then $Fte(R) \ge Fte(R_{\fp})$ for all $\fp \in \mathrm{Spec}(R)$.
\end{proposition}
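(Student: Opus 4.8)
The plan is to reduce the statement to the same mechanism used in the proof of Theorem~\ref{inequality}. There is nothing to prove if $Fte(R)=\infty$, so assume $Fte(R)=e_0<\infty$, and let $\fp\in\Spec(R)$; set $h=\Ht\fp=\dim R_{\fp}$. If $h=d$ then $\fp=\fm$ and $R_{\fp}=R$, so assume $h<d$, and fix a parameter ideal $\fQ$ of $R_{\fp}$. The one substantial point---isolated in the last paragraph---is to choose $y_1,\dots,y_h\in\fp$ with $(y_1/1,\dots,y_h/1)R_{\fp}=\fQ$ in such a way that $y_1,\dots,y_h$ is part of a system of parameters $y_1,\dots,y_d$ of $R$. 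Granting this, put $\fq=(y_1,\dots,y_h)R$. For every $m\ge 1$ the ideal $\fa_m=(y_1,\dots,y_h,y_{h+1}^m,\dots,y_d^m)R$ is a parameter ideal of $R$, so $(\fa_m^F)^{[p^{e_0}]}=\fa_m^{[p^{e_0}]}$ by the definition of $Fte(R)$; since $\fq\subseteq\fa_m$ we obtain $(\fq^F)^{[p^{e_0}]}\subseteq\bigcap_{m\ge 1}\fa_m^{[p^{e_0}]}$, and Krull's intersection theorem applied in the Noetherian local ring $R/(y_1^{p^{e_0}},\dots,y_h^{p^{e_0}})R$ gives $\bigcap_{m\ge 1}\fa_m^{[p^{e_0}]}=\fq^{[p^{e_0}]}$. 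Hence $(\fq^F)^{[p^{e_0}]}=\fq^{[p^{e_0}]}$ in $R$.

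Next I would localize at $\fp$. Forming Frobenius powers commutes with localization, and so does forming Frobenius closures: if $t\,x^{p^e}\in I^{[p^e]}$ with $t\notin\fp$ then $(tx)^{p^e}=t^{p^e-1}\bigl(t\,x^{p^e}\bigr)\in I^{[p^e]}$, so $tx\in I^F$, which gives $(I^F)R_{\fp}=(IR_{\fp})^F$. Applying these to $\fq$ and using $\fq R_{\fp}=\fQ$:
\[
(\fQ^F)^{[p^{e_0}]}=\bigl((\fq^F)^{[p^{e_0}]}\bigr)R_{\fp}=\bigl(\fq^{[p^{e_0}]}\bigr)R_{\fp}=\fQ^{[p^{e_0}]}.
\]
As $\fQ$ was an arbitrary parameter ideal of $R_{\fp}$, this shows $Fte(R_{\fp})\le e_0=Fte(R)$.

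The delicate step, which I expect to be the main obstacle, is the lifting claim. A naive lift $(a_1,\dots,a_h)R$ of $\fQ$ with $a_i\in\fp$ need not satisfy $\dim R/(a_1,\dots,a_h)R=d-h$, hence need not extend to a system of parameters of $R$; the remedy is to pick the $y_i$ inside the contracted ideal $\fc:=\{r\in R:\ r/1\in\fQ\}$. The key observation is that $\fc$ is $\fp$-primary: since $\fQ$ is $\fp R_{\fp}$-primary, $R/\fc$ embeds into $R_{\fp}/\fQ$, so $\Ass_R(R/\fc)\subseteq\Ass_R(R_{\fp}/\fQ)=\{\fp\}$, whence $\sqrt{\fc}=\fp$. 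I would then build $y_1,\dots,y_h\in\fc$ inductively, so that at stage $i$ the image $\overline{y_i}$ avoids the proper $\kappa(\fp)$-subspace of $\fQ/\fp R_{\fp}\fQ$ spanned by $\overline{y_1},\dots,\overline{y_{i-1}}$ (the natural map $\fc\to\fQ/\fp R_{\fp}\fQ$ being onto), and $y_i$ avoids every minimal prime $\fq'$ of $(y_1,\dots,y_{i-1})R$ with $\dim R/\fq'=d-(i-1)$. Any such $\fq'$ has $\Ht\fq'\le i-1<h=\Ht\fp$, so $\fp=\sqrt{\fc}\not\subseteq\fq'$, i.e.\ $\fc\not\subseteq\fq'$; the subspace condition translates into $y_i$ avoiding a proper ideal of $R$ contained in $\fc$; so prime avoidance produces the desired $y_i$ (only the ideal coming from the linear-independence condition is possibly non-prime, and $\fc$ is contained in none of the ideals being avoided). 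Inductively $\dim R/(y_1,\dots,y_i)R=d-i$ and $\overline{y_1},\dots,\overline{y_i}$ remain $\kappa(\fp)$-linearly independent, so after $h$ steps $(y_1/1,\dots,y_h/1)$ is a minimal generating set of $\fQ$ by Nakayama and $(y_1,\dots,y_h)R$ extends to a system of parameters of $R$, completing the argument.
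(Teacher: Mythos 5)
Your proof is correct and follows essentially the same route as the paper: lift the given parameter ideal of $R_{\fp}$ to part of a system of parameters $y_1,\dots,y_d$ of $R$, apply the definition of $Fte(R)$ to the parameter ideals $(y_1,\dots,y_h,y_{h+1}^m,\dots,y_d^m)$ together with Krull's intersection theorem, and then localize using the fact that Frobenius closure commutes with localization; the only difference is that you prove directly the two ingredients the paper simply cites from \cite{QS17} (the lifting statement and the commutation of Frobenius closure with localization). One tiny imprecision in your lifting argument: the map $\fc \to \fQ/\fp R_{\fp}\fQ$ need not be literally onto (its image is only an $R/\fp$-submodule), but all you actually use is that this image spans the target over $\kappa(\fp)$, so the preimage of the proper subspace spanned by $\overline{y_1},\dots,\overline{y_{i-1}}$ is a proper sub-ideal of $\fc$, and your prime-avoidance step goes through unchanged.
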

\begin{proof} We can assume that $Fte(R) = e_0$ a finite number. Let $t = \mathrm{ht}(\fp)$, and $I = (a_1, \ldots, a_t)R_{\fp}$ any parameter ideal of $R_{\fp}$. Following the proof of \cite[Proposition 6.9]{QS17} we can  choose a part of system of parameters $x_1, \ldots, x_t$ of $R$ such that $I = (x_1, \ldots, x_t)R_{\fp}$. Extending $x_1, \ldots, x_t$ to a full system of parameters $x_1, \ldots, x_d$ of $R$. We have
\begin{eqnarray*}
((x_1, \ldots, x_t )^F)^{[p^{e_0}]} &\subseteq & \bigcap_{m \ge 1} ((x_1, \ldots, x_t, x_{t+1}^m, \ldots, x_d^m )^F)^{[p^{e_0}]} \\
 &= & \bigcap_{m \ge 1} (x_1, \ldots, x_t, x_{t+1}^m, \ldots, x_d^m )^{[p^{e_0}]}\\
 &= & (x_1, \ldots, x_t)^{[p^{e_0}]}.
\end{eqnarray*}
Thus $((x_1, \ldots, x_t )^F)^{[p^{e_0}]} = (x_1, \ldots, x_t)^{[p^{e_0}]}$. Since Frobenius closure commutes with localization (see \cite[Lemma 3.3]{QS17}) we have
\begin{eqnarray*}
(I^F)^{[p^{e_0}]} &=& (((x_1, \ldots, x_t ) R_{\fp})^F)^{[p^{e_0}]}\\
& =& ((x_1, \ldots, x_t )^F R_{\fp})^{[p^{e_0}]}\\
&=& ((x_1, \ldots, x_t )^F)^{[p^{e_0}]}R_{\fp}\\
&=& (x_1, \ldots, x_t )^{[p^{e_0}]}R_{\fp}\\
&=& I^{[p^{e_0}]}.
\end{eqnarray*}
Therefore $Fte(R_{\fp}) \le e_0$. The proof is complete.
\end{proof}
Recall the a function $f: X \to  \mathbb{R} \cup \{ \infty \}$, where $X$ is a topological space, is called {\it upper semi-continuous} if for any $t \in  \mathbb{R} \cup \{ \infty \}$ we have $\{x \mid f(x) < t\}$ is an open set of $X$. We close this note with the following natural question, see \cite{Mu13} for the upper semi-continuity of function $HSL: \mathrm{Spec}(R) \to \mathbb{R} \cup \{ \infty \}, \fp \mapsto HLS(R_{\fp})$.

\begin{question} Is the function $Fte: \mathrm{Spec}(R) \to \mathbb{R} \cup \{ \infty \}, \fp \mapsto Fte(R_{\fp}),$ upper semi-continuous?

\end{question}

\begin{acknowledgement} The authors are grateful to the referee for careful reading of the paper and valuable
suggestions and comments.
\end{acknowledgement}

\end{document}